\documentclass[11pt]{article}
\usepackage{amsmath, amsfonts, amssymb, mathtools}
\usepackage{graphicx, amsthm, color}
\usepackage{pgfplots, caption, subcaption, tikz, tikz-cd} 
\usepackage{float}
\usepackage{enumitem}
\usepackage{cleveref}
\usepackage{euscript}
\usepackage{mathrsfs}

\usepackage[scr=boondox]{mathalpha}

\usepackage[T1]{fontenc}
\usepackage{fbb}

\newtheorem{theorem}{Theorem}

\newtheorem{lemma}[theorem]{Lemma}
\newtheorem{proposition}[theorem]{Proposition}

\newtheorem{claim}[theorem]{Claim}

\theoremstyle{definition}

\newtheorem{remark}[theorem]{Remark}
\newtheorem{fact}[theorem]{Fact}

\newcommand{\R}{\mathbb{R}}
\newcommand{\N}{\mathbb{N}}

\newcommand{\vol}{\mathrm{vol}}

\newcommand{\rad}{\mathscr{r}}

\begin{document}

\begin{center}
{\LARGE Impossibility of a nontrivial Brunn--Minkowski inequality for \\ higher Dirichlet eigenvalues}
\end{center}
\smallskip
\begin{center}
{\Large \textsc{Tr\'i Minh L\^e \& Khai-Hoan Nguyen-Dang}}
\end{center}

\bigskip

\noindent\textbf{Abstract.}
Let $\lambda_j(K)$ be the $j$th Dirichlet eigenvalue of a convex body $K$.
It is well known that $\lambda_1$ satisfies a Brunn--Minkowski inequality: $K \mapsto \lambda_1(K)^{-1/2}$ is concave on the family of convex bodies.
We show that no analogous statement holds for higher eigenvalues.
More precisely, for any $j \geq 2$ and $N \geq 2$, if $K \mapsto (f \circ \lambda_j)(K)$ is concave on the family of convex bodies in $\R^N$ for some function $f: (0,  \infty) \to \R$, then $f$ must be constant.

\bigskip

\noindent\textbf{Key words.} Convex body, Brunn--Minkowski inequality, Dirichlet eigenvalues, Concavity. 

\vspace{0.6cm}

\noindent\textbf{AMS Subject Classification} \ \textit{Primary} 52A20, 47A10 \textit{Secondary} 35J15, 39B62

\section{Introduction}
Let $\vol(K)$ be the volume of a measurable set $K \subset \R^N$.
A set $K$ is a convex body if it is a compact convex set with nonempty interior. 
The classical Brunn--Minkowski inequality states that $K \mapsto \vol(K)^{1/N}$ is concave on the family of convex bodies:
\begin{equation}
    \vol(tK + (1 - t)L)^{1/N} \geq t \vol(K)^{1/N} + (1 - t) \vol(L)^{1/N},
\end{equation}
for every convex bodies $K, L \subset \R^N$ and $t \in [0, 1]$.
This inequality is a cornerstone of convex geometry and related geometric optimization problems, we refer the reader to \cite{G_2002} for a comprehensive survey.

\medskip 

Analogues of the Brunn--Minkowski inequality have been found for other important variational quantities, notably the first Dirichlet eigenvalue, capacity and the torsion/Saint--Venant functional, see \cite{C_2005} and references therein.
In particular, the pioneering work of Brascamp and Lieb \cite{BL_1976} established the concavity of the map $K \mapsto \lambda_1(K)^{-1/2}$:
\begin{equation}\label{BM_lambda1}
    \lambda_1(t K + (1 - t)L)^{-1/2} \geq t \lambda_1(K)^{-1/2} + (1 - t) \lambda_1(L)^{-1/2},
\end{equation}
for every convex bodies $K, L \subset \R^N$ and $t \in [0, 1]$.
Here, $\lambda_1$ denotes the Dirichlet first eigenvalue on $K$.
This phenomenon has subsequently been observed for the first eigenvalue of other operators, for instance, general homogeneous elliptic operators \cite{CF_2020}, elliptic operators in Gauss spaces \cite{ CFLS_2024, CQS_2026} and the Monge--Amp\`ere operator \cite{S_2005}.

\medskip

Since all Dirichlet eigenvalues have the same homogeneity under dilations, the quantity $\lambda_j(K)^{-1/2}$ has the same scaling as a length. 
Thus \eqref{BM_lambda1} suggests, at least formally, the possibility of analogous Brunn--Minkowski inequalities for higher eigenvalues.
A natural question then arises:
\begin{center}
    \textit{Can one obtain a Brunn--Minkowski type inequality for higher Dirichlet eigenvalues $\lambda_j$ with $j \geq 2$?}
\end{center}
In contrast with the mentioned works, Bucur, Fragal\`a and Lamboley~\cite[Proposition 2.6]{BFL_2012} have shown that
\begin{itemize}
    \item in dimension $2$, the inequality~\eqref{BM_lambda1} fails when $\lambda_1$ is replaced by the second eigenvalue $\lambda_2$.
\end{itemize}

\medskip

This leaves open, however, whether the obstruction is specific to the canonical homogeneous choice $s \mapsto s^{-1/2}$, or whether higher eigenvalues are incompatible with Brunn--Minkowski concavity in a more intrinsic sense.
Our main result shows that the latter is the case.
More precisely, Theorem~\ref{thm.noBM} proves that no nontrivial analogue of the Brunn--Minkowski inequality can hold for higher Dirichlet eigenvalues, even after an arbitrary scalar reparametrization:
\begin{itemize}
    \item for any $j \geq 2$ and $N \geq 2$, if $K \mapsto (f \circ \lambda_j)(K)$ is concave on the family of convex bodies in $\R^N$ for some function $f: (0,  \infty) \to \R$, then $f$ must be constant.
\end{itemize}
The main ingredient is a rectangular construction in which two boxes have the same $j$th eigenvalue, while the $j$th eigenvalue of their midpoint can be prescribed to be either larger or smaller.
As a consequence of our construction, we also obtain a characterization of Brunn--Minkowski inequalities associated with $\lambda_1$, see Proposition~\ref{prop.BM1}.

\paragraph{Notations and basic facts.}
Throughout,  $\lambda_j(K) := \lambda_j(\mathrm{int} K)$ denotes the $j$th Dirichlet eigenvalue on a convex body $K$.
Denote $\N := \{ 1, 2, \cdots \}$.
For $\pmb{a} := (a_1, \cdots, a_N) \in (0, \infty)^N$ and $\pmb{m} = (m_1, \cdots, m_N) \in \N^N$, denote $R_{\pmb{a}} := [0, a_1] \times \cdots \times [0, a_N]$ and
\[
    \lambda_{\pmb{m}}(R_{\pmb{a}}) := \pi^2 \sum_{\ell = 1}^N \dfrac{m_\ell^2}{a_\ell^2}.
\]
Let us recall the following facts concerning the Dirichlet eigenvalues on rectangular boxes.
\begin{fact}\label{fa.EiOnBox}(see \cite[Example 6.1]{B_2020})
Let $N \geq 1$ and fix a rectangular box $R_{\pmb{a}}$ with $\pmb{a} = (a_1, \cdots, a_N) \in (0, \infty)^N$.
The Dirichlet eigenvalues on $R_{\pmb{a}}$ are obtained through the nondecreasing rearrangement, counting with multiplicity, of the family
\begin{align*}
	\big\{  \lambda_{\pmb{m}}(R_{\pmb{a}})  : \, \pmb{m} = (m_1, \cdots, m_N) \in \mathbb{N}^N \big\}.
\end{align*}
\end{fact}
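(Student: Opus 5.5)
The plan is the classical separation-of-variables argument. I will exhibit an explicit orthonormal family of Dirichlet eigenfunctions on $R_{\pmb{a}}$ indexed by $\pmb{m} \in \N^N$, show that it is complete in $L^2(R_{\pmb{a}})$, and deduce from completeness both that there are no other eigenvalues and that the multiplicities are exactly as claimed.

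\textbf{Step 1 (explicit eigenfunctions).} For $\pmb{m} \in \N^N$ set
\[
u_{\pmb{m}}(x) := \prod_{\ell=1}^N \sqrt{2/a_\ell}\,\sin\!\Big(\frac{m_\ell \pi x_\ell}{a_\ell}\Big).
\]
Each $u_{\pmb{m}}$ is smooth on $\overline{R_{\pmb{a}}}$ and vanishes on $\partial R_{\pmb{a}}$, so $u_{\pmb{m}} \in H^1_0(\mathrm{int}\, R_{\pmb{a}})$. A direct computation, differentiating each factor twice, gives $-\Delta u_{\pmb{m}} = \lambda_{\pmb{m}}(R_{\pmb{a}})\, u_{\pmb{m}}$. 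Orthonormality in $L^2(R_{\pmb{a}})$ follows from Fubini and the one-dimensional orthonormality of $\{\sqrt{2/a}\sin(k\pi t/a)\}_{k\geq 1}$ in $L^2(0,a)$.

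\textbf{Step 2 (completeness).} This is the step I expect to be the main obstacle.
\begin{itemize}
\item In one dimension, the sine system is complete in $L^2(0,a)$. To see this, extend $g \in L^2(0,a)$ oddly to $(-a,a)$ and use completeness of the trigonometric Fourier basis there. The odd extension kills all cosine coefficients.
\item In $N$ dimensions, I will argue by induction on $N$ via Fubini. Suppose $g \in L^2(R_{\pmb{a}})$ is orthogonal to every $u_{\pmb{m}}$. For fixed $m_1$, the function
\[
G_{m_1}(x') := \int_0^{a_1} g(x_1,x')\sin\!\Big(\frac{m_1\pi x_1}{a_1}\Big)\,dx_1
\]
lies in $L^2$ of the $(N-1)$-dimensional box, by Cauchy--Schwarz.
\item $G_{m_1}$ is orthogonal to all $(N-1)$-dimensional products of sines, so the induction hypothesis gives $G_{m_1}=0$ a.e.
\item Hence, for a.e. $x'$, the slice $g(\cdot,x')$ is orthogonal to every one-dimensional sine, so $g(\cdot,x') = 0$. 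Therefore $g=0$.
\end{itemize}
An alternative route is density of the algebraic tensor product $L^2(0,a_1)\otimes\cdots\otimes L^2(0,a_N)$ in $L^2(R_{\pmb{a}})$, for instance via simple functions on products of intervals.

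\textbf{Step 3 (identification of the spectrum with multiplicity).} The Dirichlet Laplacian on the bounded open set $\mathrm{int}\, R_{\pmb{a}}$ is self-adjoint with compact resolvent. Its spectrum is therefore a nondecreasing sequence of eigenvalues counted with multiplicity.
\begin{itemize}
\item Let $v \in H^1_0$ be an eigenfunction with eigenvalue $\mu$.
\item Green's identity (equivalently, the weak formulation tested against $u_{\pmb{m}}$) gives
\[
\mu\langle v,u_{\pmb{m}}\rangle = \lambda_{\pmb{m}}(R_{\pmb{a}})\langle v,u_{\pmb{m}}\rangle .
\]
So $\langle v,u_{\pmb{m}}\rangle = 0$ whenever $\lambda_{\pmb{m}}(R_{\pmb{a}}) \neq \mu$.
\item By completeness, $v \neq 0$ forces $\mu = \lambda_{\pmb{m}}(R_{\pmb{a}})$ for some $\pmb{m}$. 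Moreover, $v$ lies in the span of those $u_{\pmb{m}}$ with $\lambda_{\pmb{m}}(R_{\pmb{a}}) = \mu$.
\end{itemize}
This span is finite-dimensional, since $\lambda_{\pmb{m}} \to \infty$ as $|\pmb{m}| \to \infty$. Hence the eigenspace of $\mu$ has dimension exactly $\#\{\pmb{m} : \lambda_{\pmb{m}}(R_{\pmb{a}}) = \mu\}$. Listing eigenvalues nondecreasingly with multiplicity then yields precisely the rearrangement of the family $\{\lambda_{\pmb{m}}(R_{\pmb{a}})\}_{\pmb{m}\in\N^N}$ described in Fact~\ref{fa.EiOnBox}.
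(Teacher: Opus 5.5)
Your proposal is correct. The paper gives no proof of this fact and simply cites \cite[Example 6.1]{B_2020}, and your argument (explicit product-of-sines eigenfunctions, completeness of that orthonormal system via odd extension and induction with Fubini, then identifying each eigenspace with $\mathrm{span}\{u_{\pmb{m}} : \lambda_{\pmb{m}}(R_{\pmb{a}}) = \mu\}$) is the standard separation-of-variables proof behind such a reference. The one detail worth stating explicitly is that in Step~2 the exceptional null set in $x'$ must be chosen uniformly in $m_1$, which is harmless since $m_1$ ranges over the countable set $\N$.
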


\begin{fact}\label{fa.EiHo}(see \cite[Section 1.2.3]{H_2006})
For any convex body $K$, $\lambda_j(tK) = t^{-2} \lambda_j(K)$ for every $j \geq 1$ and $t > 0$.
\end{fact}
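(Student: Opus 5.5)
The plan is to use the Courant--Fischer min--max characterization of the Dirichlet eigenvalues on the open set $\Omega := \mathrm{int} K$:
\[
    \lambda_j(\Omega) = \min_{\substack{V \subset H^1_0(\Omega) \\ \dim V = j}} \ \max_{u \in V \setminus \{0\}} \frac{\int_\Omega |\nabla u|^2 \, dx}{\int_\Omega u^2 \, dx},
\]
together with the observation that the dilation $x \mapsto tx$ maps $\Omega$ onto $\mathrm{int}(tK) = t\Omega$. The argument does not use convexity beyond the fact that $\Omega$ is a bounded open set, so that the Dirichlet Laplacian has compact resolvent and the min--max formula applies.

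First, for $t > 0$ define the operator $T_t u(y) := u(y/t)$ for $y \in t\Omega$. I would check that $T_t$ maps $C_c^\infty(\Omega)$ bijectively onto $C_c^\infty(t\Omega)$, with inverse $T_{1/t}$. By density it then extends to a linear isomorphism $H^1_0(\Omega) \to H^1_0(t\Omega)$. In particular, $T_t$ maps $j$-dimensional subspaces bijectively onto $j$-dimensional subspaces.

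Second, I would compute the Rayleigh quotient by the change of variables $y = tx$, $dy = t^N dx$, using $\nabla (T_t u)(y) = t^{-1} (\nabla u)(y/t)$. This gives
\[
    \int_{t\Omega} |\nabla T_t u|^2 \, dy = t^{N-2} \int_\Omega |\nabla u|^2 \, dx, \qquad \int_{t\Omega} |T_t u|^2 \, dy = t^{N} \int_\Omega u^2 \, dx.
\]
Hence the Rayleigh quotient of $T_t u$ equals $t^{-2}$ times that of $u$.

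Third, I would combine the two previous steps. Since $T_t$ induces a bijection between the admissible families of $j$-dimensional subspaces, and multiplies every quotient by the constant $t^{-2}$, the min--max values satisfy $\lambda_j(t\Omega) = t^{-2} \lambda_j(\Omega)$ for every $j$. Alternatively, one can check directly that $-\Delta u = \lambda u$ in $\Omega$ with $u \in H^1_0(\Omega)$ holds if and only if $-\Delta T_t u = t^{-2} \lambda \, T_t u$ in $t\Omega$. Because $T_t$ is a linear isomorphism, multiplicities are preserved, so the ordered spectra correspond.

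There is no real obstacle here. The only point needing a little care is that $T_t$ preserves $H^1_0$ and dimensions of subspaces, which follows from its invertibility and from the density of $C_c^\infty$.
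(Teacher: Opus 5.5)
Your proof is correct and is essentially the standard argument: the paper gives no proof of its own and cites Henrot's book, where the scaling law comes from the same change of variables $y = tx$ in the Rayleigh quotient (or in the eigenvalue equation) combined with the min--max characterization. Your remark that only the boundedness and openness of $\mathrm{int}\,K$ are used, not convexity, is also accurate.
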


\section{Main results}



This section proves the following result: for higher Dirichlet eigenvalues, Brunn--Minkowski concavity cannot be restored on the family of convex bodies by any nonconstant scalar reparametrization.

\begin{theorem}\label{thm.noBM}
Let $j \geq 2$ and $N \geq 2$.
Let $f : (0, \infty) \to \R$ be such that 
\begin{equation}\label{f_concave}
    f(\lambda_j(t K + (1 - t)L)) \geq t f(\lambda_j(K)) + (1 - t) f(\lambda_j(L)),
\end{equation}
for every convex bodies $K, L \subset \R^{N}$ and $t \in [0, 1]$.
Then, $f$ is constant.
\end{theorem}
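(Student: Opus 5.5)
The plan is to show that $f$ is locally constant on $(0,\infty)$; since $(0,\infty)$ is connected, this makes $f$ constant. Only $t=1/2$ and rectangular boxes will be used. The Minkowski midpoint of $R_{\pmb a}$ and $R_{\pmb b}$ is $R_{(\pmb a+\pmb b)/2}$. By Fact~\ref{fa.EiHo}, if boxes $K,L$ satisfy $\lambda_j(K)=\lambda_j(L)=\mu_0$ and $\lambda_j\big(\tfrac12K+\tfrac12L\big)=r\mu_0$, then applying this to $sK,sL$ for $s>0$ gives
\[
  f(r\mu)\geq f(\mu)\qquad\text{for every }\mu>0 .
\]
So it suffices to exhibit $\delta>0$ such that every ratio $r\in(1-\delta,1+\delta)$ is realised by such a pair. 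Then, for $\mu,\nu>0$ with $\nu/\mu$ and $\mu/\nu$ both in $(1-\delta,1+\delta)$, we get $f(\nu)\ge f(\mu)$ and $f(\mu)\ge f(\nu)$, so $f$ is locally constant.

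\textbf{Reduction to $N=2$.} Take boxes $R_{(a,b,\varepsilon,\dots,\varepsilon)}$ with $\varepsilon$ so small that every $\lambda_{\pmb m}$ with some $m_\ell\ge 2$ for $\ell\ge3$ exceeds the first $j$ relevant eigenvalues. By Fact~\ref{fa.EiOnBox}, $\lambda_j$ is then the two-dimensional $j$th eigenvalue of $[0,a]\times[0,b]$ plus the constant $\pi^2(N-2)/\varepsilon^2$. The ratio argument has to be adapted accordingly. The cleanest route is to prove the $N=2$ ratio statement first and check that the additive constant only rescales the achievable interval of ratios, which is still a neighbourhood of $1$ after choosing parameters.

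\textbf{Construction for $N=2$.} Take $K=R_{(1,x)}$ and $L=R_{(x,1)}$. By symmetry and Fact~\ref{fa.EiOnBox}, $\lambda_j(K)=\lambda_j(L)$, and the midpoint is the square of side $(1+x)/2$. Set
\[
  g(x)=\frac{\lambda_j\big(R_{((1+x)/2,(1+x)/2)}\big)}{\lambda_j(R_{(1,x)})}.
\]
This is continuous in $x$, because $\lambda_j$ is the $j$th order statistic of finitely many relevant continuous functions $m_1^2+m_2^2/x^2$. It satisfies $g(1)=1$, and $g(x)\to0$ as $x\to\infty$: the numerator decays like $x^{-2}$, while the denominator tends to $\pi^2$. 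By the intermediate value theorem, it then suffices to find one $x$ with $g(x)>1$, arbitrarily close to $1$, so that $g$ covers an interval $[1,1+\delta)$ and an interval $(1-\delta,1]$.

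For $j=2$ this is explicit. For $x\ge1$ we have $\lambda_2(R_{(1,x)})=\pi^2(1+4/x^2)$, so $g(x)=20/\big((1+x)^2(1+4/x^2)\big)$. Then $(\log g)'(1)=-1+8/5>0$, hence $g>1$ just to the right of $1$.

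\textbf{Main obstacle: general $j$.} Write $\sigma$ for the $j$th value of $m_1^2+m_2^2$ on the unit square. The one-sided derivative of the numerator at $x=1$ is $-\sigma\pi^2$. That of the denominator is $-2m_2^2\pi^2$, where $(m_1,m_2)$ is the branch that is active at index $j$ just to the right of $1$. So $g>1$ near $1^+$ when $m_2>m_1$. Symmetrically, on the left side the condition is $m_2<m_1$ for the left-active branch. Within the cluster $\{m_1^2+m_2^2=\sigma\}$, the right ordering lists branches by decreasing $m_2$ and the left ordering by increasing $m_2$, so at least one side has an off-diagonal active branch of the right sign. The bad case is when the active branch is the diagonal $m_1=m_2$.

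If the diagonal case occurs, I would first replace the square base point by a rectangle: take $K=R_{(c,cx)}$ and $L=R_{(cx,c)}$ with an aspect parameter, or use a non-symmetric pair $R_{(a,b)}$, $R_{(a',b')}$ normalised to equal $\lambda_j$. I would then choose the base configuration at a crossing of two branches $(p,q)$ and $(q,p)$ with $p\ne q$. Checking that such a crossing with the correct sign always exists at index $j$ is the step I expect to require the most care.
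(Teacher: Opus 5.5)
\textbf{Verdict: there is a genuine gap.} Several of your ingredients match the paper and are correct:
\begin{itemize}
\item the overall skeleton: only $t=1/2$ and boxes, rescaling via Fact~\ref{fa.EiHo} to get $f(r\mu)\ge f(\mu)$, realising all ratios in a neighbourhood of $1$, and a thin cube factor to pass from $N=2$ to general $N$;
\item the ratios $\le 1$ from $R_{(1,x)},R_{(x,1)}$ with $g(x)\to 0$, which is exactly Lemma~\ref{lem.2D_wierdBox-};
\item your $j=2$ computation.
\end{itemize}
The gap is producing ratios $>1$ for $j\ge 3$, and the step you rely on there is false.

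Since $R_{(1,1/x)}=x^{-1}R_{(x,1)}$ and $R_{(1/x,1)}=x^{-1}R_{(1,x)}$, the pair at $1/x$ is a dilate of the pair at $x$ with $K$ and $L$ swapped, so $g(1/x)=g(x)$. In your own bookkeeping: inside the cluster, decreasing-$m_2$ order is the same as increasing-$m_1$ order. Hence the left-active branch is the mirror image $(m_2,m_1)$ of the right-active branch $(m_1,m_2)$. So the conditions ``$m_2>m_1$ on the right'' and ``$m_2<m_1$ on the left'' are one and the same condition, not complementary ones. That condition is $k\le p$, where the $j$th eigenvalue is the $k$th member of the cluster and $p$ members have $m_2>m_1$.

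This condition already fails off the diagonal for $j=3$. The cluster is $\{(1,2),(2,1)\}$ with $\sigma=5$, and the right-active branch is $(2,1)$. The failure is global, not only near $x=1$. For $x\ge 1$ one has $\lambda_3(R_{(1,x)})=\pi^2\min\{4+x^{-2},\,1+9x^{-2}\}$. Both $(1+x)^2(4+x^{-2})$ and $(1+x)^2(1+9x^{-2})$ exceed $20$ for $x>1$. Hence $g(x)<1$ for every $x\neq 1$, and the symmetric pair never yields a ratio above $1$ when $j=3$.

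Your fallback does not repair this:
\begin{itemize}
\item The pair $R_{(c,cx)},R_{(cx,c)}$ is a dilate of the same pair, so it has the same $g$.
\item Branches $(p,q)$ and $(q,p)$ with $p\neq q$ can only cross where $(p^2-q^2)(a^{-2}-b^{-2})=0$, i.e.\ on squares. That is exactly the configuration that just failed.
\end{itemize}

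The missing idea is a genuinely asymmetric pair, placed on a common level set of $\lambda_j$ at a crossing where $\lambda_j$ is locally the \emph{minimum} of two branches with different level curves. The paper does this as follows:
\begin{itemize}
\item It takes the non-square rectangle $R_{(A,B)}$ where the modes $(j,1)$ and $(1,2)$ both have normalized value $1$, while exactly $(1,1),\dots,(j-1,1)$ lie strictly below.
\item It moves $K$ to the left along the level curve $y=\varphi_v(x)$ of the $(1,2)$ mode, and $L$ to the right along the level curve $y=\varphi_h(x)$ of the $(j,1)$ mode. This keeps $\lambda_j(K)=\lambda_j(L)=\pi^2$.
\item It uses $\varphi_h'(A)<\varphi_v'(A)$ to see that the midpoint $R_{(A,\bar\varphi(\delta))}$ has $\bar\varphi(\delta)<B$. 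This pushes both crossing modes, and hence $\lambda_j/\pi^2$, above $1$.
\item Continuity and the intermediate value theorem then fill an interval $[1,1+\eta]$.
\end{itemize}

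Without such a construction, your argument only yields $f(r\mu)\ge f(\mu)$ for $r\le 1$, i.e.\ that $f$ is nonincreasing, which does not give constancy. Your reduction to $N=2$ is fine in principle. The paper does it by choosing the cube so that $\lambda_1(Q)$ is a fixed multiple of $\alpha$ and rescaling the target ratio affinely. But that reduction only transports ratios already produced in the plane.
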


To prove the above theorem, let us start by some preparatory lemmas.

\begin{lemma}\label{lem.continuity}
    Let $j \geq 1$.
    Then, the function $\nu_j : (x, y) \mapsto \lambda_j(R_{(x, y)})$ is continuous on $(0, \infty) \times (0, \infty)$.
\end{lemma}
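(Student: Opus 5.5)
By Fact~\ref{fa.EiOnBox} with $N=2$, $\nu_j(x,y)$ is the $j$th smallest element, counted with multiplicity, of the family $\{\pi^2(m^2/x^2+n^2/y^2) : (m,n)\in\N^2\}$. The plan is to exploit the standard fact that the $j$th order statistic of finitely many continuous functions is continuous. This requires reducing the infinite family to a finite one, uniformly on a neighborhood of a given point.

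Fix $(x_0,y_0)\in(0,\infty)^2$ and work on the compact neighborhood $U=[x_0/2,2x_0]\times[y_0/2,2y_0]$. First, we need an upper bound. The $j$ values $\lambda_{(m,1)}$ for $m=1,\dots,j$ are all at most $C:=\pi^2 j^2\big(4/x_0^2+4/y_0^2\big)$ on $U$. Hence $\nu_j\le C$ on $U$.

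Next, we reduce to a finite index set. If $m> M$ or $n> M$, where $M$ is chosen so that $\pi^2M^2/(4\max(x_0,y_0)^2)>C$, then $\lambda_{(m,n)}(R_{(x,y)})>C$ for every $(x,y)\in U$. Let $F=\{1,\dots,M\}^2$. For every $(x,y)\in U$, the family has at least $j$ members $\le C$, and all of them have indices in $F$. Therefore $\nu_j(x,y)$ equals the $j$th smallest element, with multiplicity, of the finite family $\{\lambda_{\pmb m}(R_{(x,y)}):\pmb m\in F\}$. Here one checks that $|F|\ge j$; this holds since $M\ge j$ by the choice of $C$.

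Finally, we show that the $j$th order statistic of finitely many continuous functions is continuous. Each $\lambda_{\pmb m}(R_{(x,y)})$ is clearly continuous. The order statistic can be written as
\[
\nu_j=\min_{S\subset F,\ |S|=j}\ \max_{\pmb m\in S}\lambda_{\pmb m}(R_{(x,y)}),
\]
which is a finite min of finite maxes of continuous functions, hence continuous on $U$. This proves continuity at $(x_0,y_0)$.

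The only delicate step is the uniform truncation. Its purpose is to ensure that no far-out index can slip below the cutoff at nearby points. The crude bound $C$ handles this.
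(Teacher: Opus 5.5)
Your proof is correct and takes essentially the paper's route: truncate uniformly near $(x_0,y_0)$ to a finite set of modes, then write the $j$th order statistic as a $\min$ over $j$-element subsets of a $\max$ of finitely many continuous functions. The only difference is minor. You take the cutoff from an explicit bound $\nu_j\le C$ on the whole box, using the modes $(m,1)$ with $1\le m\le j$, whereas the paper cuts off at $\nu_j(x_0,y_0)+1$ and then shrinks to a smaller neighborhood using continuity of the truncated order statistic $\mu_j$.
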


\begin{proof}
Fix $x_0, y_0 > 0$ and set $D = [x_0/2, 3x_0/2] \times [y_0/2, 3y_0/2]$.
Since the eigenvalue at $(m_1,m_2)$--mode is determined by
\[
    \lambda_{(m_1, m_2)}(R_{(x, y)}) = \pi^2 \left( \dfrac{m_1^2}{x^2} + \dfrac{m_2^2}{y^2} \right),
\]
the positive lower bound for $(x, y) \in D$ then yields
\[
    \inf_{(x, y) \in D} \lambda_{(m_1, m_2)}(R_{(x, y)}) \to \infty \quad \text{ as } m_1^2 + m_2^2 \to \infty.
\]
Then, there exists $M \in \N$ large enough such that 
\begin{equation}\label{Big_Eig}
    \lambda_{(m_1, m_2)} (R_{(x, y)}) \geq \nu_j(x_0, y_0) + 1 \quad \text{ for every } (x, y) \in D, \,\, m_1^2 + m_2^2 \geq M.
\end{equation}
Set
\[ 
    F := \big\{ (m_1, m_2) \in \N^2 : m_1^2 + m_2^2 < M \big\}.
\]
It follows from~\eqref{Big_Eig} that the first $j$ eigenvalues at $(x_0, y_0)$ are indexed by elements of $F$.
Denote
\[
    \mu_j(x, y) := \min_{\substack{A \subset F \\ \# A = j} } \max_{(m_1, m_2) \in A} \lambda_{(m_1, m_2)}(R_{(x, y)})
\]
the  $j$th smallest value of the finite family $\{ \lambda_{(m_1, m_2)}(R_{(x, y)}) \}_{(m_1, m_2) \in F}$.
Observe that $\mu_j(x_0, y_0) = \nu_j(x_0, y_0)$ and $\mu_j$ is continuous since it is obtained from finitely many continuous functions by taking minima and maxima. 

\medskip

By the continuity of $\mu_j$, there exists a neighborhood $U \subset D$ of $(x_0, y_0)$ such that 
\begin{equation}\label{U_Neigh}
    \mu_j(x, y) < \nu_j(x_0, y_0) + 1 \quad \text{ for every } (x, y) \in U.
\end{equation}
Combining~\eqref{Big_Eig} and~\eqref{U_Neigh}, we obtain
\begin{align*}
    \lambda_{(m_1, m_2)}(R_{(x, y)}) > \mu_j(x, y) \quad \text{ for every } (x, y) \in U, \,\, (m_1, m_2) \not\in F.
\end{align*}
This implies that for any $(x, y) \in U$, no eigenvalue indexed by $(m_1, m_2) \not\in F$ can contribute to the first 
$j$ eigenvalues of $R_{(x, y)}$.
Hence, $\mu_j = \nu_j$ on $U$.
Then, the continuity of $\mu_j$ implies the continuity of $\nu_j$ at $(x_0, y_0)$, completing the proof.
\end{proof}

\begin{lemma}\label{lem.2D_wierdBox+}
    Let $j \geq 2$.
    Then, there exists $\rad_+  > 1$ such that for every $\sigma \in [1, \rad_+]$, there are planar rectangles $K, L$ satisfying 
        \[
            \lambda_j(K) = \lambda_j(L) \quad \text{ and } \quad \lambda_j \left( \dfrac{K + L}{2} \right) = \sigma \lambda_j(K).
        \]
\end{lemma}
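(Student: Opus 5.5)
The plan is to take $K = R_{(x,y)}$ and $L = R_{(y,x)}$. Then $\lambda_j(K) = \lambda_j(L)$ automatically by symmetry, via Fact~\ref{fa.EiOnBox}, and $\frac{K+L}{2}$ is the square $R_{(s,s)}$ with $s = \frac{x+y}{2}$. By Fact~\ref{fa.EiHo} the quantity
\[
  g(x,y) := \frac{\nu_j(s,s)}{\nu_j(x,y)}, \qquad s=\tfrac{x+y}{2},
\]
is invariant under dilations. By Lemma~\ref{lem.continuity} it is continuous on $(0,\infty)^2$, and $g(1,1) = 1$.

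It therefore suffices to find one point $(x_*,y_*)$ with $g(x_*,y_*) > 1$ and set $\rad_+ := g(x_*,y_*)$. Indeed, along the segment from $(1,1)$ to $(x_*,y_*)$ the map $g$ is continuous. The intermediate value theorem then gives, for each $\sigma \in [1,\rad_+]$, a point $(x,y)$ on the segment with $g(x,y) = \sigma$. This is exactly $\lambda_j(\frac{K+L}{2}) = \sigma\lambda_j(K)$.

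To produce such a point I would perturb the square, taking $x = 1+\varepsilon$ and $y = 1-\varepsilon$. Then the midpoint is exactly the unit square, and to first order
\[
  \lambda_{(m,n)}(R_{(x,y)}) = \pi^2(m^2+n^2) + 2\varepsilon\pi^2(n^2-m^2) + O(\varepsilon^2).
\]
The eigenvalues of the square come in clusters $c = \pi^2(m^2+n^2)$, and every off-diagonal pair $(m,n),(n,m)$ splits symmetrically by $\pm 2\varepsilon\pi^2|n^2-m^2|$. Suppose $j$ is not the largest index in its cluster; for instance $j=2$, where the cluster $\{(1,2),(2,1)\}$ has indices $2,3$. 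Then the $j$th ordered value of the perturbed cluster moves strictly down at first order. Hence $\lambda_j(R_{(x,y)}) < \lambda_j(R_{(1,1)})$, i.e.\ $g > 1$, for small $\varepsilon > 0$. For $j=2$ this is the explicit computation $\pi^2(5-8\varepsilon+\dots)$ versus $5\pi^2$; keeping instead the constraint $x+y$ fixed with $y=1$, one gets $5-8\varepsilon$ versus $5-5\varepsilon$, consistent with this.

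The main obstacle is the case where $j$ is the \emph{last} index of its square cluster. Then the top of the split cluster moves up, and the perturbation near the square gives $g<1$ instead. For this case I would move away from the square, to a rectangle aspect ratio $x/y = \rho$ at which two distinct modes $(m_1,n_1)$ and $(m_2,n_2)$ cross exactly at position $j$ and $j+1$. I would then compare, using Lemma~\ref{lem.continuity} and first-order expansions in $\rho$, the rectangle $R_{(\rho,1)}$ with the square of side $\frac{\rho+1}{2}$. The aim is to choose a crossing where the rectangle's $j$th value is pushed down relative to the square's. If no such crossing can be controlled directly, I would fall back on a more general choice of $K,L$, namely non-reflected rectangles $R_{(x_1,y_1)}$ and $R_{(x_2,y_2)}$ on the same level set of $\nu_j$. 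In that setting I would use connectedness of level sets together with continuity to locate a midpoint whose $j$th eigenvalue exceeds the common value.
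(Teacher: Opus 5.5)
Your argument covers only those $j$ that lie among the first $p$ indices of their cluster of square eigenvalues, where $p$ is the number of off-diagonal pairs (e.g.\ $j=2$). The remaining case is not a technicality: for some $j$ the mirror-pair construction $L=R_{(y,x)}$ can never give $\sigma>1$.

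Take $j=3$ and $x\ge y$. Then $(1,1)$ and $(2,1)$ are the two lowest modes, and every other mode dominates $(1,2)$ or $(3,1)$, so
\[
\lambda_3(R_{(x,y)})=\pi^2\min\Bigl\{\frac{1}{x^2}+\frac{4}{y^2},\ \frac{9}{x^2}+\frac{1}{y^2}\Bigr\}.
\]
Normalize $x+y=2$. The midpoint is then the unit square, with $\lambda_3=5\pi^2$, and $y\le 1$.
\begin{itemize}
\item The first entry is $\ge 5$, because $y\mapsto (2-y)^{-2}+4y^{-2}$ has derivative $\le 2-8<0$ on $(0,1]$.
\item The second entry is $>5$. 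If $x\le 3/2$, then $9/x^2\ge 4$ and $1/y^2\ge 1$, and the two equalities cannot hold together. If $x>3/2$, then $1/y^2>4$ and $9/x^2>9/4$.
\end{itemize}
Hence $g\le 1$ on all of $(0,\infty)^2$ when $j=3$: the square minimizes $\lambda_3$ among rectangles of fixed perimeter. Your plan of moving to a crossing at aspect ratio $\rho$ and comparing $R_{(\rho,1)}$ with the square of side $(\rho+1)/2$ stays inside the mirror-pair framework, so it cannot work for $j=3$.

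Your first case is also stated too generously. Near the square, only the values $c-2\varepsilon|n^2-m^2|$ coming from off-diagonal pairs move down. A diagonal mode $(m,m)$ moves up at second order, so for $c=50=5^2+5^2=1^2+7^2$ the middle index goes up. For $c=65=1^2+8^2=4^2+7^2$, the third of four indices moves up at first order. Separately, for $x=1+\varepsilon$, $y=1-\varepsilon$ the $(2,1)$ mode is $5-6\varepsilon$, not $5-8\varepsilon$; this slip is harmless.

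The missing idea, which is what the paper uses, is to give up symmetric pairs and exploit a concave corner of the level set of $\nu_j$ created by two modes crossing exactly at position $j$.
\begin{itemize}
\item The paper takes $R_{(A,B)}$ with $A^2=(4j^2-1)/3$ and $B^2=(4j^2-1)/(j^2-1)$. There the $(j,1)$ and $(1,2)$ modes both equal $\pi^2$, and only $(1,1),\dots,(j-1,1)$ lie strictly below.
\item Moving left along the level curve $y=\varphi_v(x)$ of the $(1,2)$ mode, and right along the level curve $y=\varphi_h(x)$ of the $(j,1)$ mode, keeps $\nu_j=\pi^2$ for small $\delta$, after checking that no other mode crosses.
\item Since $\varphi_h'(A)<\varphi_v'(A)$, the midpoint $R_{(A,\bar\varphi(\delta))}$ has $\bar\varphi(\delta)<B$. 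So both competing modes exceed $\pi^2$ there, while at most $j-1$ modes can be $\le\pi^2$; thus $\nu_j>\pi^2$.
\item Lemma~\ref{lem.continuity} and the intermediate value theorem then give every $\sigma\in[1,\rad_+]$, uniformly in $j\ge2$.
\end{itemize}
Your closing sentence about level sets and connectedness points in this direction but supplies none of these ingredients. You need an explicit crossing at index $j$ with exactly $j-1$ modes below. You also need the right ordering of the slopes of the two branches, so that the chord midpoint lands on the side where $\nu_j$ is larger.
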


\textbf{Idea of the proof.}  
The idea is to start from a rectangle for which the two modes $(j,1)$ and $(1,2)$ coincide and realize the $j$-th eigenvalue, while the modes $(1,1),\ldots,(j-1,1)$ lie strictly below.
We then perturb this rectangle in
two different directions while preserving the value of the $j$-th eigenvalue for each perturbed rectangle.
The midpoint of the two resulting rectangles has a smaller second side length, and consequently its $j$-th eigenvalue is strictly larger.

\begin{proof}[Proof of Lemma~\ref{lem.2D_wierdBox+}]
Denote $\nu_j(x, y) := \lambda_j(R_{(x, y)})$ for $x, y > 0$.
We suppress the common factor $\pi^2$ and write the normalized Dirichlet eigenvalues of the rectangle with side lengths $x, y > 0$ as
\begin{align*}
    \theta_{m_1, m_2}(x, y) := \dfrac{m_1^2}{x^2} + \dfrac{m_2^2}{y^2}, \quad \text{ for every } m_1, m_2 \in \N.
\end{align*}
Set 
\[
    A := \sqrt{ \dfrac{4j^2 - 1}{3} } \quad \text{ and } \quad B := \sqrt{ \dfrac{4j^2 - 1}{j^2 - 1} }.
\]
Note that $A > j > 1$ since $j \geq 2$.
A direct computation yields
\begin{equation}\label{BaseID}
    \dfrac{ j^2 }{ A^2 } + \dfrac{1}{B^2} = 1 \,\, \text{ and  } \,\, \dfrac{1}{A^2} + \dfrac{2^2}{B^2} = 1,
\end{equation}
and so $\theta_{j, 1}(A, B) = \theta_{1, 2}(A, B) = 1$.
On the rectangle $R_{(A, B)}$, we consider the following cases:

\medskip

\textbf{Case 1:}
\emph{$m_1 < j$ and $m_2 = 1$}.
In this case, one has 
\[
    \theta_{m_1, 1}(A, B) < \theta_{j, 1}(A, B) = 1.
\]

\smallskip

\textbf{Case 2:}
\emph{$m_1 > j$ and $m_2 = 1$.}
In this case, one has
\[
    \theta_{m_1, 1}(A, B) > \theta_{j, 1}(A, B) = 1.
\]

\smallskip

\textbf{Case 3:}
\emph{$m_2\geq 2$ and $(m_1,m_2)\neq (1,2)$}.
In this case, one has
\[
\theta_{m_1,m_2}(A,B)> \theta_{1, 2}(A, B) = 1.
\]

\smallskip

As a consequence of the above observations, on $R_{(A, B)}$, the first $j - 1$ eigenvalues are below $\pi^2$ and the $j$th eigenvalue is exactly $\pi^2$, that is, $\nu_j(A, B) = \pi^2$.

\medskip

Next, we will perturb the rectangle $R_{(A, B)}$ in two opposite directions such that the $j$th eigenvalue of the resulting perturbed rectangles remains equal to $\pi^2$.
Set
\begin{align*}
    & \, \varphi_v(x) := \dfrac{2}{\sqrt{1 - x^{- 2} }}, \quad\quad \text{ for } x > 1, \\
    & \, \varphi_h(x) := \dfrac{1}{\sqrt{1 - j^2 x^{-2} }}, \quad \text{ for } x > j.
\end{align*}
Notice that by~\eqref{BaseID}, $\varphi_v(A) = \varphi_h(A) = B$.

\medskip

We first claim that $\nu_j(A - \delta, \varphi_v(A - \delta)) = \pi^2$ for every $\delta > 0$ small enough.
Indeed, by definition of $\varphi_v$, we have
\begin{align}
    & \, \theta_{1, 2} (x, \varphi_v(x)) = 1, \qquad\qquad\qquad\, \text{ for } x > 1, \label{Vertical_Node[12]}\\
    &\, \theta_{j, 1}(x, \varphi_v(x)) = \dfrac{1}{4} + \dfrac{j^2 - 1/4}{x^2}, \quad \text{ for } x > 1 \label{Vertical_Node[j1]}.
\end{align}
Notice that
\[
    \theta_{j - 1, 1}(A, \varphi_v(A)) = \theta_{j - 1, 1}(A, B) = 1 - \dfrac{3}{2j + 1} < 1,
\]
and 
\[
    \theta_{1, 1}(x, y) \leq \cdots \leq \theta_{j - 1, 1}(x, y) \quad \text{ for } x, y > 0.
\]
Consequently, thanks to the continuity of the map $x \mapsto \theta_{j - 1, 1}(x, \varphi_v(x))$, there exists $\delta_0 \in (0, A - 1)$ small enough such that
\begin{equation}\label{Ver.Nei.01}
    \theta_{m_1, 1}(A - \delta, \varphi_v(A - \delta)) < 1 \,\, \text{ for every } 0 < \delta < \delta_0, \, m_1 \in \{1, \cdots, j - 1 \}.
\end{equation}
Consider the following cases:

\smallskip
\textbf{Case 1:}
\emph{$m_1 \geq j$ and $m_2 = 1$.}
Due to~\eqref{Vertical_Node[j1]}, the map $x \mapsto \theta_{j, 1}(x, \varphi_v(x))$ is decreasing on $(1, \infty)$ and so 
\[
    \theta_{j, 1}(A - \delta, \varphi_v(A - \delta)) > \theta_{j, 1}(A, \varphi_v(A)) = \theta_{j, 1}(A, B) = 1.
\]
Therefore, for any $m_1 \geq j$, it holds
\begin{equation}\label{Ver.Nei.02}
    \theta_{m_1, 1}(A - \delta, \varphi_v(A - \delta)) \geq \theta_{j, 1}(A - \delta, \varphi_v(A - \delta)) > 1 \,\, \text{ for every } 0 < \delta < \delta_0.
\end{equation}

\smallskip

\textbf{Case 2:}
\emph{$(m_1, m_2) \neq (1, 2)$  and $m_2 \geq 2$.}
In this case, it follows from~\eqref{Vertical_Node[12]} that 
\begin{equation}\label{Ver.Nei.03}
    \theta_{m_1, m_2}(A - \delta, \varphi_v(A - \delta)) > \theta_{1, 2}(A - \delta, \varphi_v(A - \delta)) = 1.
\end{equation}

\medskip

Combining~\eqref{Vertical_Node[12]}, \eqref{Ver.Nei.01}, \eqref{Ver.Nei.02} and \eqref{Ver.Nei.03}, on $R_{(A - \delta, \varphi_v(A - \delta))}$ for every $0 < \delta < \delta_0$, the first $j - 1$ eigenvalues are below $\pi^2$  (counting with multiplicity) and the $j$th eigenvalue is exactly $\pi^2$, corresponding to the $(1, 2)$-mode.
Therefore, we have proved that
\begin{equation}\label{PertuVer}
    \nu_j(A - \delta, \varphi_v(A - \delta)) = \pi^2 \quad \text{ for every }0 < \delta < \delta_0.
\end{equation}

\medskip

Analogously, there exists $\delta_1 > 0$ such that
\begin{equation}\label{PertuHo}
    \nu_j(A + \delta, \varphi_h(A + \delta)) = \pi^2 \quad \text{ for every } 0 < \delta < \delta_1.
\end{equation}
For any fixed $\delta \in [0, \min \{\delta_0, \delta_1\})$, denote
\[
    K_{\delta} := R_{(A - \delta, \varphi_v(A - \delta))}, \,\, L_\delta := R_{(A + \delta, \varphi_h(A + \delta))} \,\, \text{ and } \,\, M_\delta := \dfrac{1}{2}(K_\delta + L_\delta) = R_{(A, \bar\varphi(\delta))},
\]
where
\[
    \bar \varphi(\delta) := \dfrac{1}{2} \big( \varphi_v(A - \delta) + \varphi_h(A + \delta) \big).
\]

\medskip

We now show that $\nu_j(A, \bar\varphi(\delta)) > \pi^2$ for every $\delta > 0$ small enough.
Indeed, a direct computation gives
\begin{align*}
    & \varphi_v'(x) = - \dfrac{2}{ x^3(1 - x^{-2})^{3/2} } \quad\, \text{ for } x > 1, \\
    & \varphi_h'(x) = - \dfrac{j^2}{x^3(1 - j^2 x^{-2})^{3/2} } \,\, \text{ for } x > j.
\end{align*}
Since $\varphi_v(A) = \varphi_h(A) = B$, we get
\[
    \varphi_h'(A)  - \varphi'_v(A) = - \left( j^2 - \dfrac{1}{4} \right) \dfrac{B^3}{A^3} < 0.
\]
Taylor expansion at $\delta = 0$ yields
\[
    \bar \varphi(\delta) = B + \dfrac{\delta}{2} (\varphi_h'(A)  - \varphi'_v(A)) + o(\delta).
\]
Hence, there exists $\delta_\ast \in (0, \min \{ \delta_0, \delta_1\})$ such that $\bar \varphi(\delta) < B$ for  every $\delta \in (0, \delta_\ast]$.
For such $\delta > 0$, we consider the following cases:

\smallskip

\textbf{Case 1:} 
\emph{$m_1 \geq j$ and $m_2 = 1$.}
In this case, we get
\begin{align*}
    \theta_{m_1, 1}(A, \bar\varphi(\delta)) \geq \theta_{j, 1}(A, \bar \varphi(\delta))  = \dfrac{j^2}{A^2} + \dfrac{1}{\bar\varphi(\delta)^2} > \dfrac{j^2}{A^2} + \dfrac{1}{B^2} 
    = 1,
\end{align*}
where we have used ~\eqref{BaseID} in the last identity.
\smallskip

\textbf{Case 2:}
\emph{$m_1 \geq 1$ and $m_2 \geq 2$.}
In this case, we get
\begin{align*}
    \theta_{m_1, m_2}(A, \bar\varphi(\delta)) \geq \theta_{1, 2}(A, \bar\varphi(\delta)) = \dfrac{1^2}{A^2} + \dfrac{2^2}{\bar\varphi(\delta)^2} > \dfrac{1^2}{A^2} + \dfrac{2^2}{B^2} 
    = 1,
\end{align*}
where we have used ~\eqref{BaseID} in the last identity.
\smallskip

Thus, on the rectangle $R_{(A, \bar\varphi(\delta))}$, at most the $j - 1$ modes
\[
    (1, 1), (2, 1), \cdots, (j - 1, 1)
\]
can have normalized value at most $1$.
Therefore, the $j$th normalized eigenvalue is strictly larger than $1$ and so
\begin{equation}\label{MLarge}
    \nu_j(A, \bar\varphi(\delta)) > \pi^2 \quad \text{ for every $\delta\in (0, \delta_\ast]$.}
\end{equation}

\textbf{Conclusion.}
At $\delta = 0$, we know that $K_0 = L_0 = R_{(A, B)}$ and so $\lambda_j(K_0) = \lambda_j(L_0) = \pi^2$.
Combining with~\eqref{PertuVer} and~\eqref{PertuHo}, we get that $\lambda_j(K_\delta) = \lambda_j(L_\delta) = \pi^2$ for every $\delta \in [0, \delta_\ast]$.
Define $\rho_+ : [0, \delta_\ast] \to (0, + \infty)$ by
\[
    \rho_+(\delta) = \dfrac{\lambda_j(M_\delta)}{\lambda_j(K_\delta)}.
\]
Thanks to Lemma~\ref{lem.continuity}, $\rho_+$ is continuous on $[0, \delta_\ast]$. 
Moreover, observe that $\rho_+(0) = 1$ and by~\eqref{MLarge},
$\rho_+(\delta) > 1$ for every $\delta \in (0, \delta_\ast]$.
Set 
\[
    \rad_+ := \max_{\delta \in [0, \delta_\ast]}\rho_+(\delta) > 1.
\]
Applying the intermediate value theorem, for any $\sigma \in [1, \rad_+]$, there exists $\delta \in [0, \delta_\ast]$ such that $\rho_+(\delta) = \sigma$.
Consequently, for this choice of $\delta$,
\[
    \lambda_j(M_\delta) = \rho_+(\delta) \lambda_j(K_\delta) = \sigma \lambda_j(K_\delta).
\]
This completes the proof.
\end{proof}

\begin{lemma}\label{lem.2D_wierdBox-}
    Let $j \geq 2$.
    Then, there exists $\rad_- \in (0, 1)$ such that for every $\sigma \in [\rad_-, 1]$, there are planar rectangles $K, L$ satisfying 
        \[
            \lambda_j(K) = \lambda_j(L) \quad \text{ and } \quad \lambda_j \left( \dfrac{K + L}{2} \right) = \sigma \lambda_j(K).
        \]
\end{lemma}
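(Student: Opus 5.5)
The plan is to mirror the proof of Lemma~\ref{lem.2D_wierdBox+}, but to work near a rectangle where $\nu_j := \lambda_j(R_{(\cdot,\cdot)})$ is given locally by a \emph{single} mode rather than by the minimum of two coinciding modes. The point is convexity. For fixed $m_1, m_2$, the normalized eigenvalue $\theta_{m_1,m_2}(x,y) = m_1^2 x^{-2} + m_2^2 y^{-2}$ is strictly convex on $(0,\infty)^2$, because $s \mapsto s^{-2}$ is strictly convex. Hence, if two rectangles lie on the level curve $\{\theta_{m_1,m_2} = 1\}$ and are distinct, their coordinatewise midpoint satisfies $\theta_{m_1,m_2} < 1$. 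Since the Minkowski midpoint of two rectangles $R_{\pmb a}, R_{\pmb b}$ is $R_{(\pmb a + \pmb b)/2}$, this is exactly the direction we need: it gives $\sigma < 1$.

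\emph{Step 1: base rectangle.} I would take $(x_0, y_0) = (2j, 1)$ and rescale by Fact~\ref{fa.EiHo} if needed. On $R_{(x_0,y_0)}$ I claim the modes $(1,1), \dots, (j,1)$ are the $j$ smallest, and that $(j,1)$ is strictly separated from every other mode.
\begin{itemize}
\item The modes $(m,1)$ with $m \le j$ are strictly increasing in $m$, and $(m,1)$ with $m > j$ lies strictly above $(j,1)$.
\item Every mode with $m_2 \geq 2$ is at least $\theta_{1,2} = 1/x_0^2 + 4$. This strictly exceeds $\theta_{j,1} = j^2/x_0^2 + 1 = 5/4$, since the difference is $3 - (j^2-1)/(4j^2) > 0$.
\end{itemize}
By Fact~\ref{fa.EiOnBox} this gives $\nu_j = \pi^2 \theta_{j,1}$ at $(x_0,y_0)$. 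Because there is a strict gap both below and above, continuity of the finitely many relevant modes shows $\nu_j = \pi^2 \theta_{j,1}$ on a whole neighbourhood $U$ of $(x_0, y_0)$. This is the same reasoning as in Lemma~\ref{lem.continuity}: modes outside a finite set $F$ are uniformly large on $U$.

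\emph{Step 2: the two rectangles.} Set $c := \theta_{j,1}(x_0,y_0)$ and parametrize the level curve by $\psi(x) := (c - j^2 x^{-2})^{-1/2}$, so that $\psi(x_0) = y_0$. For small $\delta \ge 0$ put
\[
K_\delta := R_{(x_0 - \delta, \psi(x_0 - \delta))}, \qquad L_\delta := R_{(x_0 + \delta, \psi(x_0 + \delta))}.
\]
Both lie in $U$, so $\lambda_j(K_\delta) = \lambda_j(L_\delta) = \pi^2 c$. Their midpoint is $M_\delta = R_{(x_0, \bar\psi(\delta))}$ with $\bar\psi(\delta) := \tfrac12\big(\psi(x_0-\delta) + \psi(x_0+\delta)\big)$, and it also lies in $U$ for $\delta$ small. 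Strict convexity of $\theta_{j,1}$, applied to the two distinct points $(x_0\mp\delta, \psi(x_0\mp\delta))$ on its level set, gives $\lambda_j(M_\delta) = \pi^2\theta_{j,1}(x_0, \bar\psi(\delta)) < \pi^2 c$ for every $\delta \in (0, \delta_\ast]$.

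\emph{Step 3: conclusion via the intermediate value theorem.} Define $\rho_-(\delta) := \lambda_j(M_\delta)/\lambda_j(K_\delta)$. By Lemma~\ref{lem.continuity} it is continuous on $[0,\delta_\ast]$, with $\rho_-(0) = 1$ and $\rho_-(\delta) < 1$ for $\delta > 0$. Setting $\rad_- := \min_{[0,\delta_\ast]} \rho_- \in (0,1)$, the intermediate value theorem produces, for each $\sigma \in [\rad_-, 1]$, some $\delta$ with $\rho_-(\delta) = \sigma$. The corresponding $K_\delta, L_\delta$ are the required rectangles.

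The main obstacle is Step 1: verifying that $\nu_j$ coincides with the single smooth mode $\pi^2\theta_{j,1}$ on an entire neighbourhood, not just at the base point. This requires the strict gap on both sides of $(j,1)$ and the uniform control of high modes, both handled as in the proof of Lemma~\ref{lem.continuity}. Everything after that is a one-line convexity argument, in contrast to the two-mode kink exploited in Lemma~\ref{lem.2D_wierdBox+}.
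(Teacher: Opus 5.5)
Your proof is correct, but it takes a different route from the paper. The paper's proof is not local. It takes the congruent pair $K=R_{(a,1)}$, $L=R_{(1,a)}$, so $\lambda_j(K)=\lambda_j(L)$ holds automatically. Their midpoint is the square $R_{((a+1)/2,(a+1)/2)}$, and homogeneity gives its $j$th eigenvalue as $4\nu_j(1,1)/(a+1)^2$. The bound $\nu_j(a,1)\ge\pi^2$ then forces the ratio $\rho_-(a)\to 0$ as $a\to\infty$, and Lemma~\ref{lem.continuity} with the intermediate value theorem on $[1,a_\ast]$ finishes. That route needs no bookkeeping of which mode realizes $\lambda_j$, and it proves more: $\rad_-$ can be taken arbitrarily close to $0$, so every $\sigma\in(0,1]$ is attained.

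Your route follows a level curve of the single mode $(j,1)$ near $(2j,1)$ and uses strict convexity of $\theta_{j,1}$. It only produces $\sigma$ in a small interval below $1$, which is all Proposition~\ref{prop.wierdBox} needs. In exchange it puts both lemmas in one framework and shows the mechanism. The $\sigma<1$ direction comes from convexity of an individual smooth mode, a phenomenon already present for $\lambda_1$. The $\sigma>1$ direction in Lemma~\ref{lem.2D_wierdBox+} needs the kink where two crossing modes meet.

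Step 1, which you flag as the main obstacle, is easier than you suggest. Every mode other than $(1,1),\dots,(j,1)$ is bounded below by $\theta_{j+1,1}$ or by $\theta_{1,2}$. Hence $\nu_j=\pi^2\theta_{j,1}$ on the whole cone $\{3x^2>(j^2-1)y^2\}$, with no need for uniform control of high modes. Moreover, the $j$ modes $(1,1),\dots,(j,1)$ always lie at or below $\theta_{j,1}$, so $\nu_j\le\pi^2\theta_{j,1}$ everywhere. Your midpoint estimate therefore does not even require $M_\delta\in U$.
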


\begin{proof}
Denote $\nu_j(x,y):=\lambda_j(R_{(x,y)})$ for $x,y>0$.
For every $a\geq 1$, since $R_{(a,1)}$ and $R_{(1,a)}$ are congruent, we have $   \nu_j(a,1)=\nu_j(1,a)$.
Moreover,
\[
    \dfrac{1}{2} (R_{(a,1)}+R_{(1,a)}) =     R_{\left(\frac{a+1}{2},\frac{a+1}{2}\right)}.
\]
The homogeneity of $\lambda_j$ then gives
\[
    \nu_j\left(\dfrac{a+1}{2},\dfrac{a+1}{2}\right)
    =
    \dfrac{4\nu_j(1,1)}{(a+1)^2}.
\]
Define $\rho_-:[1,+\infty)\to(0,+\infty)$ by
\[
    \rho_-(a)
    :=
    \dfrac{\nu_j\left((a + 1)/2, (a  + 1)/2 \right)}{\nu_j(a,1)}
    =
    \dfrac{4\nu_j(1,1)}{(a+1)^2\nu_j(a,1)}.
\]
Notice that $\rho_-(1)=1$. 
We claim that $\rho_-(a)\to 0$ as $a\to+\infty$.
Indeed, using Fact~\ref{fa.EiOnBox}, 
\[
    \nu_j(a,1)\geq \pi^2
    \quad \text{ for every } a\geq 1.
\]
Consequently,
\[
    0<\rho_-(a)
    \leq
    \dfrac{4\nu_j(1,1)}{\pi^2(a+1)^2}
    \to 0
    \quad \text{as } a\to+\infty.
\]
Choose $a_\ast>1$ large enough such that $\rho_-(a_\ast)<1$ and set $\rad_-:=\rho_-(a_\ast)\in(0,1)$.
Thanks to Lemma~\ref{lem.continuity}, the map $\rho_-$ is continuous on $[1,a_\ast]$.
Therefore, applying the intermediate value theorem, for every $\sigma\in[\rad_-,1]$, there exists $a\in[1,a_\ast]$ such that $\rho_-(a)=\sigma$.
For this choice of $a$, set
\[
    K:=R_{(a,1)}
    \quad \text{and} \quad
    L:=R_{(1,a)}.
\]
Then, we get
\[
    \lambda_j(K)=\lambda_j(L) \quad \text{ and } \quad \lambda_j\left(\dfrac{K+L}{2}\right)
    =
    \rho_-(a)\lambda_j(K)
    =
    \sigma\lambda_j(K).
\]
This completes the proof.
\end{proof}

From the two lemmas above, we obtain the following key property.

\begin{proposition}\label{prop.wierdBox}
Let $N\ge2$ and $j\ge2$.  
There exists $\rad_0 > 1$ such that for every
$\sigma\in[\rad_0^{-1},\rad_0]$ there are rectangular convex bodies $K ,L 
\subset\R^N$ satisfying
\begin{equation}\label{eq:equal-endpoints}
 \lambda_j(K )=\lambda_j(L ) \quad \text{ and } \quad \lambda_j\!\left(\frac{K + L }{2}\right)=\sigma\lambda_j(K).
\end{equation}
\end{proposition}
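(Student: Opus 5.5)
We lift the planar constructions of Lemma~\ref{lem.2D_wierdBox+} and Lemma~\ref{lem.2D_wierdBox-} to $\R^N$ by taking products with a very short box in the remaining $N-2$ directions. Then we set $\rad_0 := \min\{\rad_+, \rad_-^{-1}\} > 1$. When $N = 2$ there is nothing to lift, since the two lemmas directly cover $[\rad_-, \rad_+] \supset [\rad_0^{-1}, \rad_0]$. For $N \ge 3$ the plan is as follows.

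Fix $\sigma \in [\rad_0^{-1}, \rad_0]$ and take planar rectangles $K' = R_{(x_1,y_1)}$ and $L' = R_{(x_2,y_2)}$ from the appropriate lemma. Put $K = K' \times [0,\varepsilon]^{N-2}$ and $L = L' \times [0,\varepsilon]^{N-2}$ with $\varepsilon > 0$ small. The Minkowski midpoint of two boxes is the box of averaged side lengths, so
\[
\frac{K+L}{2} = \frac{K'+L'}{2} \times [0,\varepsilon]^{N-2}.
\]
By Fact~\ref{fa.EiOnBox}, the eigenvalues of $K$ are
\[
\lambda_{(m_1,m_2)}(K') + \pi^2 \varepsilon^{-2}\sum_{\ell\ge3} m_\ell^2 .
\]
The constant shift $c_\varepsilon := (N-2)\pi^2\varepsilon^{-2}$ comes from $m_3 = \dots = m_N = 1$. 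Any other choice of $(m_3,\dots,m_N)$ adds at least $3\pi^2\varepsilon^{-2}$ more.

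The key step is to choose $\varepsilon$ so small that $3\pi^2\varepsilon^{-2}$ exceeds $\lambda_j$ of each of the three planar rectangles $K'$, $L'$, $(K'+L')/2$. For such $\varepsilon$, the first $j$ eigenvalues of each product box come from the modes with $m_3 = \dots = m_N = 1$. Indeed, the $j$ smallest planar values plus $c_\varepsilon$ are all below every eigenvalue having some $m_\ell \ge 2$, $\ell \ge 3$. Hence $\lambda_j(P \times [0,\varepsilon]^{N-2}) = \lambda_j(P) + c_\varepsilon$ for each of the three rectangles $P$.

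This shift preserves the equality $\lambda_j(K) = \lambda_j(L)$, but it turns the ratio into
\[
\frac{\sigma\lambda_j(K') + c_\varepsilon}{\lambda_j(K') + c_\varepsilon} \neq \sigma .
\]
This is the main obstacle: the additive shift destroys the prescribed ratio. To repair it, run the planar construction with a modified target. The equation
\[
\frac{\tau \mu + c}{\mu + c} = \sigma
\]
forces $\tau = \sigma + (\sigma - 1)c/\mu$, which pushes $\tau$ away from $1$ and may leave the range of the lemmas. Fixing $\varepsilon$ independently of $\sigma$ does not help, since $c_\varepsilon$ is huge compared with planar eigenvalues.

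So instead I would apply the planar lemmas in the other direction. Make the two extra directions very long rather than short: $K = K' \times [0,T]^{N-2}$ with $T \to \infty$. Then the shift is $c_T = (N-2)\pi^2 T^{-2} \to 0$. The ordering issue also changes: modes with $m_\ell \ge 2$ become nearly degenerate with the $m_\ell = 1$ modes, so they do interleave. The index $j$ must then be re-examined, which breaks the clean reduction.

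The cleanest fix I would therefore try first is to work directly in $N$ dimensions by redoing the proofs of both lemmas with a thin factor. Define
\[
\rho(\cdot) = \frac{\lambda_j(\text{midpoint})}{\lambda_j(K)}
\]
for the $N$-dimensional product family with fixed small $\varepsilon$. It is continuous by the obvious $N$-dimensional analogue of Lemma~\ref{lem.continuity}. The argument of Lemma~\ref{lem.2D_wierdBox+} still shows $\rho(0) = 1$ and $\rho(\delta) > 1$ for small $\delta > 0$, because the shifted ordering and strict inequalities are preserved. The argument of Lemma~\ref{lem.2D_wierdBox-}, with $K = R_{(a,1,\varepsilon,\dots)}$ and $L = R_{(1,a,\varepsilon,\dots)}$ congruent, still gives $\rho(1) = 1$. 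It also gives $\rho(a_\ast) < 1$ for some $a_\ast$, since the midpoint has the two planar sides $(a+1)/2$ and $\lambda_j$ is monotone under inclusion; this last point needs checking. The intermediate value theorem then yields an interval $[\rad_0^{-1}, \rad_0]$ after shrinking $\rad_0$ to the smaller of the two one-sided ranges.
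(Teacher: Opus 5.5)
Your final route works, and its core mechanism is the paper's. You take a product with a thin cube $Q=[0,\varepsilon]^{N-2}$ and use the reduction $\lambda_i(P\times Q)=\lambda_i(P)+\lambda_1(Q)$ for $i\le j$ once $\lambda_2(Q)-\lambda_1(Q)>\lambda_j(P)$, then the intermediate value theorem.

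The difference is organizational. You fix $\varepsilon$ and rerun the IVT arguments of Lemmas~\ref{lem.2D_wierdBox+} and~\ref{lem.2D_wierdBox-} on the $N$-dimensional ratio. That requires reopening their proofs (the explicit families $K_\delta,L_\delta$ and $R_{(a,1)},R_{(1,a)}$) and an $N$-dimensional version of Lemma~\ref{lem.continuity}. The paper instead uses the two lemmas only through their statements.

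Your reason for abandoning the reparametrized target was mistaken, and that reparametrization is exactly the paper's proof. By homogeneity, choose $\varepsilon$ depending on $\alpha=\lambda_j(K_\tau)$ so that $\lambda_1(Q)=\rad\alpha$ for one fixed $\rad>(N-2)\rad_+/3$. Then the required planar target is $\tau=\sigma(1+\rad)-\rad$. This is affine in $\sigma$ with $\tau=1$ at $\sigma=1$. Taking $\rad_0=\min\{1+(\rad_+-1)/(1+\rad),\,(1+\rad)/(\rad+\rad_-)\}$ keeps $\tau\in[\rad_-,\rad_+]$.

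A large shift only shrinks the range of attainable ratios; it does not destroy it. Your fixed-$\varepsilon$ version exploits the same fact: on the plus side your $N$-dimensional ratio is $(\rho_+(\delta)+c_\varepsilon/\pi^2)/(1+c_\varepsilon/\pi^2)$, which is the paper's formula with $\rad=c_\varepsilon/\pi^2$. So the paper's version gains modularity, and yours gains nothing essential.

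One step in your sketch needs a different justification: the claim $\rho(a_\ast)<1$ on the minus side. Inclusion monotonicity cannot compare $K=R_{(a,1,\varepsilon,\dots,\varepsilon)}$ with the midpoint $R_{((a+1)/2,(a+1)/2,\varepsilon,\dots,\varepsilon)}$, because for $a>1$ neither box contains the other. Argue as in Lemma~\ref{lem.2D_wierdBox-} instead:

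\begin{itemize}
\item A lower bound: $\lambda_j(K)\ge\lambda_1(K)>\pi^2+c_\varepsilon$.
\item An upper bound: the modes $(m,1,1,\dots,1)$ with $m=1,\dots,j$ show that $\lambda_j$ of the midpoint is at most $4\pi^2(j^2+1)/(a+1)^2+c_\varepsilon$.
\end{itemize}

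Together these give $\limsup_{a\to\infty}\rho(a)\le c_\varepsilon/(\pi^2+c_\varepsilon)<1$, for any fixed $\varepsilon$.

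With that repair, and with $\rad_0$ for $N\ge3$ taken smaller than $\min\{\rad_+,\rad_-^{-1}\}$ (as you note at the end), your argument is complete.
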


\begin{proof}
Let $\rad_+ > 1$ and $\rad_- \in (0, 1)$ be defined as in Lemma~\ref{lem.2D_wierdBox+} and Lemma~\ref{lem.2D_wierdBox-}, respectively.
If $N = 2$, taking $\rad_0 := \min \{ \rad_+, \rad_-^{-1} \} > 1$ yields the desired conclusion.

\medskip

Assume $N > 2$.
Fix $\rad > 0$ sufficiently large such that 
\[
    \rad > \dfrac{(N - 2) \rad_+}{3}.
\]
Set
\[
    \rad_0 := \min \left\{ 1 + \dfrac{\rad_+ - 1}{1 + \rad}, \dfrac{1 + \rad}{\rad + \rad_-}  \right\} > 1.
\]
Given $\sigma \in [\rad_0^{-1}, \rad_0]$, set $\tau := \sigma(1 + \rad) - \rad$.
It is straightforward to check that
\begin{itemize}
    \item if $\sigma \in [1, \rad_0]$, then $\tau \in [1, \rad_+]$;
    \item if $\sigma \in [\rad_0^{-1}, 1]$, then $\tau \in [\rad_-, 1]$.
\end{itemize}
Therefore, it follows from Lemmas~\ref{lem.2D_wierdBox+}--\ref{lem.2D_wierdBox-} that there exist planar rectangles $K_\tau, L_\tau$ such that 
\[
    \lambda_j(K_\tau) = \lambda_j(L_\tau) =: \alpha \quad \text{ and } \quad \lambda_j \left(  \dfrac{K_\tau + L_\tau }{2}\right) = \tau \alpha.
\]
Take $\epsilon > 0$ such that $\lambda_1(Q) = \rad \alpha$ where $Q := [0, \epsilon]^{N - 2}$.
Set
\begin{equation}\label{KL_Choice}
    K := K_\tau \times Q \quad \text{ and } \quad L := L_\tau \times Q.
\end{equation}
To compute the $j$th eigenvalue of $K$ and $L$, we need the following elementary claim, which is a consequence of Fact~\ref{fa.EiOnBox}.

\begin{claim}\label{claim.EigOnProduct}
    Let $A \subset \R^2$ be a rectangle and let $P \subset \R^{N - 2}$ be a rectangular box.
    If $\lambda_2(P) - \lambda_1(P) > \lambda_j(A)$,
    then
    \[
            \lambda_i(A \times P) = \lambda_i(A)  + \lambda_1(P) \quad \text{ for every } i \in \{ 1, \cdots, j \}.
    \]
\end{claim}

\textit{Proof of Claim~\ref{claim.EigOnProduct}.}
Thanks to Fact~\ref{fa.EiOnBox}, the Dirichlet spectrum on $A \times P$ is the nondecreasing rearrangement, counting with multiplicity, of the set
\[
    \big\{ \lambda_p(A) + \lambda_q(P) : p, q \geq 1 \big\}.
\]
Among the modes with $q  = 1$, the first $j$ eigenvalues are 
\begin{equation}\label{AQ_jfirst}
    \lambda_1(A) + \lambda_1(P) \leq \cdots \leq \lambda_j(A) + \lambda_1(P).
\end{equation}
Considering the case $q \geq 2$, since $\lambda_2(P) - \lambda_1(P) > \lambda_j(A) > \lambda_j(A) - \lambda_1(A)$, we obtain
\[
    \lambda_p(A) + \lambda_q(P) \geq \lambda_1(A) + \lambda_2(P) > \lambda_j(A) + \lambda_1(P) \quad \text{ for every } p \geq 1.
\]
Therefore, no eigenvalue of the $(p, q)$-mode with $q \geq 2$ contributes to the first $j$ eigenvalues of $A \times P$.
Lastly, by~\eqref{AQ_jfirst}, we conclude
\[
    \lambda_i(A \times P) = \lambda_i(A) + \lambda_1(P) \quad \text{ for every } i \in \{1, \cdots, j \}.
\]
Claim~\ref{claim.EigOnProduct} is proven.
\hfill$\Diamond$

\medskip

Coming back to our proof, it follows from Fact~\ref{fa.EiOnBox} that
\[
    \lambda_1(Q) = \dfrac{\pi^2(N - 2)}{\epsilon^2} \quad \text{ and } \quad \lambda_2(Q) = \dfrac{\pi^2(N + 1)}{\epsilon^2}
\]
and hence, by the choice of $\rad$,
\[
    \lambda_2(Q) - \lambda_1(Q) = \dfrac{3\pi^2}{\epsilon^2} = \dfrac{3 \rad \alpha}{N - 2} > \alpha = \lambda_j(K_\tau).
\]
Applying Claim~\ref{claim.EigOnProduct} to the case $A = K_\tau$ (resp. $A = L_\tau$) and $P = Q$, we infer that
\[
    \lambda_j(K) = \lambda_j(K_\tau) + \lambda_1(Q) = (\rad + 1)\alpha \quad \text{ (resp. $\lambda_j(L) = (\rad + 1)\alpha$)}.
\]
Notice that
\[
    \dfrac{1}{2} (K + L) = \dfrac{K_\tau + L_\tau}{2} \times Q.
\]
Again, by the choice of $\rad$, we have
\[
    \lambda_2(Q) - \lambda_1(Q) = \dfrac{3\rad \alpha}{N - 2} > \rad_+ \alpha \geq \tau \alpha = \lambda_j \left( \dfrac{K_\tau + L_\tau}{2}  \right).
\]
Then, applying Claim~\ref{claim.EigOnProduct} to the case $A = (K_\tau + L_\tau)/2$ and $P = Q$ gives
\[
    \lambda_j \left( \dfrac{K + L}{2} \right) = \lambda_j \left( \dfrac{K_\tau + L_\tau}{2}  \right) + \lambda_1(Q) = (\tau + \rad)\alpha = \sigma \lambda_j(K).
\]
Therefore, the sets $K, L$ chosen by~\eqref{KL_Choice} satisfy the required properties, which completes the proof.
\end{proof}

We are now ready to prove the main results.

\begin{proof}[Proof of Theorem~\ref{thm.noBM}]
Let $\rad_0 > 1$ be defined as in Proposition~\ref{prop.wierdBox}.
Fix $\sigma \in [\rad_0^{-1}, \rad_0]$ and $x > 0$.
Thanks to Proposition~\ref{prop.wierdBox}, there exist rectangular convex bodies $K, L$ such that
\[
    \lambda_j(K) = \lambda_j(L) \quad \text{ and } \lambda_j \left( \dfrac{K + L}{2} \right) = \sigma \lambda_j(K)
\]
Set $s := \sqrt{\lambda_j(K)/x}$.
It follows that
\[
    \lambda_j(sK) = \lambda_j(sL) = x \quad \text{ and } \lambda_j \left( \dfrac{sK + sL}{2} \right) = \sigma x
\]
Applying the inequality~\eqref{f_concave} to the convex bodies $sK$, $sL$ and $t = 1/2$, we obtain $f(\sigma x) \geq f(x).$
Since $x > 0$ and $\sigma \in [\rad_0^{-1}, \rad_0]$ are arbitrary, we infer
\begin{equation}\label{f-sigma}
	f(\sigma x) \geq f(x) \quad \text{ for every } x > 0, \, \sigma \in [\rad_0^{-1}, \rad_0].
\end{equation}
Notice that if $\sigma \in [\rad_0^{-1}, \rad_0]$, then $\sigma^{-1} \in [\rad_0^{-1}, \rad_0]$.
Applying~\eqref{f-sigma} again, we get
\begin{align*}
	f(x) = f(\sigma^{-1}\sigma x) \geq f(\sigma x),
\end{align*}
and therefore we arrive at
\begin{equation}\label{f-sigm-Eqal}
	f(\sigma x) = f(x) \quad \text{ for every } x > 0, \,\, \sigma \in [\rad_0^{-1}, \rad_0].
\end{equation}

\medskip

Lastly, fix $x, y > 0$.
Choose $d \in \N$ sufficiently large such that 
\[
	\sigma = \left( \dfrac{y}{x}\right)^{1/d} \in [\rad_0^{-1}, \rad_0].
\]
Note that $\sigma^d x = y$.
Applying the identity~\eqref{f-sigm-Eqal} $d$ times,  we obtain
\[
	f(y) = f(\sigma^d x) = f(\sigma^{d - 1} x) = \cdots = f(\sigma x)  = f(x).
\]
Thus $f(x) = f(y)$ for all $x, y > 0$ and so $f$ is constant.
\end{proof}

As a consequence of the preceding techniques, we conclude the paper with a result for the first eigenvalue, which extends the classical Brascamp--Lieb inequality.

\begin{proposition}\label{prop.BM1}
    Let $N \geq 2$ and $f: (0, \infty) \to \R$.
    Then, the following assertions are equivalent:
    \begin{itemize}
        \item[(a)] the function $f \circ \lambda_1$ is concave on the family of convex bodies in $\R^N$;

        \item[(b)]
        the function $g(r) := f(r^{-2})$ is concave and nondecreasing on $(0, \infty)$.
    \end{itemize}
\end{proposition}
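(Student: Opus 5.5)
We prove the two implications separately; the substantive direction is (a)$\Rightarrow$(b), where rectangular constructions force the conditions on $g$.

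\textbf{(b)$\Rightarrow$(a).} Write $f \circ \lambda_1 = g \circ \lambda_1^{-1/2}$. By the Brascamp--Lieb inequality~\eqref{BM_lambda1}, $h(K) := \lambda_1(K)^{-1/2}$ satisfies $h(tK+(1-t)L) \geq t h(K) + (1-t) h(L)$. Since $g$ is nondecreasing, this gives $g(h(tK+(1-t)L)) \geq g(t h(K) + (1-t) h(L))$. Concavity of $g$ then bounds the right-hand side below by $t g(h(K)) + (1-t) g(h(L))$. This direction is routine.

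\textbf{(a)$\Rightarrow$(b), concavity of $g$.} I would use homothetic bodies. For $r_1, r_2 > 0$, take a fixed convex body $K_0$ with $\lambda_1(K_0)=1$, say a suitably scaled cube. By Fact~\ref{fa.EiHo}, $\lambda_1(r K_0) = r^{-2}$. Since $t r_1 K_0 + (1-t) r_2 K_0 = (t r_1 + (1-t) r_2) K_0$ by convexity of $K_0$, applying (a) gives
\[
g(t r_1 + (1-t) r_2) \geq t g(r_1) + (1-t) g(r_2).
\]
This is exactly concavity of $g$ on $(0,\infty)$.

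\textbf{(a)$\Rightarrow$(b), monotonicity of $g$.} This is the main obstacle. I would follow the approach of Lemma~\ref{lem.2D_wierdBox-}: find pairs $K, L$ with equal $\lambda_1$ whose midpoint has strictly smaller $\lambda_1$. Take $K = R_{(a,1)}$ and $L = R_{(1,a)}$ with $a > 1$, times a small cube $Q$ in the extra $N-2$ coordinates when $N>2$. These have equal $\lambda_1$, while the midpoint is $R_{((a+1)/2,(a+1)/2)}$, times $Q$. For $N=2$ we have $\lambda_1(R_{(a,1)}) = \pi^2(a^{-2}+1)$, and the midpoint value is $8\pi^2/(a+1)^2$. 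The ratio $\rho(a)$ of the two is continuous, equals $1$ at $a=1$, and tends to $0$ as $a\to\infty$. In higher dimension the product formula for $\lambda_1$ (Fact~\ref{fa.EiOnBox}) adds $\lambda_1(Q)$ to both; one can simply take $Q$ to be a fixed cube and still get a ratio below $1$ for large $a$. We only need $\lambda_1$, so Claim~\ref{claim.EigOnProduct} is not required.

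Applying (a) with $t=1/2$ and rescaling as in the proof of Theorem~\ref{thm.noBM} yields $f(\sigma x) \geq f(x)$ for all $x>0$ and all $\sigma$ in some interval $[\rad, 1]$ with $\rad<1$. Iterating covers every $\sigma \in (0,1]$, so $f$ is nonincreasing. Since $r \mapsto r^{-2}$ is decreasing, $g$ is nondecreasing.

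An alternative for monotonicity is to use concavity together with inclusion, since $K\subset L$ forces $\lambda_1(K)\geq\lambda_1(L)$. However, the rectangle argument above is self-contained and parallels the paper's techniques, so I would use it.
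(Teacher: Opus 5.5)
Your proof is correct and follows essentially the paper's argument. The only variation is in the monotonicity step for $N>2$: the paper pads with the growing factor $[0,a]^{N-2}$, so that the ratio $\lambda_1(W_a)^{-1/2}/\lambda_1(U_a)^{-1/2}$ tends to $+\infty$ and every $\rho\geq 1$ is reached in one step, whereas your fixed cube $Q$ only yields ratios in a bounded range, which you correctly extend by the multiplicative iteration from the proof of Theorem~\ref{thm.noBM}. (Your aside about using inclusion monotonicity instead would not work on its own: inclusion monotonicity holds just as well for intervals in $\R$, where the paper's remark shows $g$ need not be nondecreasing. You do not rely on it, though.)
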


\begin{remark}
The restriction $N\ge 2$ is essential.
Indeed, in dimension one, convex bodies are nondegenerate intervals.
Consider $f(r) := - r^{-1/2}$ for $r > 0$.
Then $g(r)=-r$, which is concave and strictly decreasing on $(0,\infty)$.
On the other hand,
\[
    f(\lambda_1(I)) = -\lambda_1(I)^{-1/2} =  -\frac{|I|}{\pi}.
\]
If $I,J\subset\mathbb R$ are nondegenerate intervals, then
\[
    |tI+(1-t)J|
    =
    t|I|+(1-t)|J|,
    \qquad t\in[0,1].
\]
Hence $f\circ\lambda_1$ is affine and in particular concave, on the family of  nondegenerate intervals.
Thus, in dimension one, the concavity of $f\circ\lambda_1$ does not force $g$ to be nondecreasing.
\end{remark}

\begin{proof}[Proof of Proposition~\ref{prop.BM1}]
Assume that $(b)$ holds.
It is not hard to infer from the Brascamp--Lieb inequality that $f \circ \lambda_1$ is concave on the family of convex bodies in $\R^N$.

\medskip

Assume that $(a)$ holds. 
We first prove that $g$ is concave. 
Fix a convex body $C$ and without any loss of generality, assume that $\lambda_1(C) = 1$.
Fix $r,s>0$ and $t\in[0,1]$.
Applying the concavity inequality of $f \circ \lambda_1$ to the case $K = r C$ and $L = sC$ and using the homogeneity of $\lambda_1$, we obtain
\begin{align*}
    g(tr + (1 - t)s) = f(\lambda_1(tK + (1 - t)L)) \geq t f(\lambda_1(K)) + (1 - t) f(\lambda_1(L)) = tg(r) + (1 - t)g(s).
\end{align*}

\medskip

It remains to show that $g$ is nondecreasing. 
For any fixed $a \ge 1$, set
\[
 U_a:=[0,a]\times[0,1]\times[0,a]^{N-2},
 \qquad
 V_a:=[0,1]\times[0,a]\times[0,a]^{N-2},
\]
where the last factor is omitted if $N=2$.  
Since the boxes $U_a$ and $V_a$ are congruent, one has $\lambda_1(U_a) = \lambda_1(V_a)$.
Moreover,
\[
 W_a:=\frac{U_a+V_a}{2}
 =
 [0,(a+1)/2]^2\times[0,a]^{N-2}.
\]
Thanks to Fact~\ref{fa.EiOnBox}, we compute explicitly
\[
 \lambda_1(U_a)=\lambda_1(V_a)
 =
 \pi^2\left(1+\frac{N-1}{a^2}\right) \quad \text{ and } \quad \lambda_1(W_a)
 =
 \pi^2\left(\frac{8}{(a+1)^2}+\frac{N-2}{a^2}\right).
\]
Therefore
\[
 \eta(a):=\frac{\lambda_1(W_a)^{-1/2}}{\lambda_1(U_a)^{-1/2}}  = 
 \left(
 \frac{1+(N-1)a^{-2}}
 {8(a+1)^{-2}+(N-2)a^{-2}}
 \right)^{1/2}.
\]
Notice that the function $\eta$ is continuous on $[1,\infty)$.
Further, it satisfies $\eta(1) = 1$ and
\[
 \eta(a)\to+\infty
 \quad\text{ as }a \to +\infty.
\]
Hence, by the intermediate value theorem, for every $\rho\ge1$ there exists
$a\ge1$ such that $\eta(a)=\rho$.

\medskip

Finally, fix $r>0$ and $\rho\ge1$. 
Choose $a$ with $\eta(a)=\rho$ and set
\[
 \gamma:=\frac{r}{\lambda_1(U_a)^{-1/2}},
 \qquad
 K:=\gamma U_a,
 \qquad
 L:=\gamma V_a.
\]
Then
\[
 \lambda_1(K)^{-1/2} = \lambda_1(L)^{-1/2} = r
 \quad \text{ and } \quad
 \lambda_1\!\left(\frac{K+L}{2}\right)^{-1/2} = \rho r.
\]
Applying the concavity inequality of $f \circ \lambda_1$ with $t=1/2$ gives
\[
 g(\rho r)
 =
 f\left( \lambda_1\!\left(\frac{K+L}{2}\right) \right)
 \ge
 \dfrac{1}{2} \big( f(\lambda_1(K))+ f(\lambda_1(L)) \big) =  g(r).
\]
Since $r>0$ and $\rho\ge1$ are arbitrary, $g$ is nondecreasing on $(0,\infty)$.
Proposition~\ref{prop.BM1} is proven.
\end{proof}

\vspace{0.5cm}

\textbf{Acknowledgement.} 
T. M. L\^e would like to thank Alberto Dom\'inguez Corella for fruitful discussions on the case of $\lambda_1$.
The research of  T. M. L\^e was funded by the Austrian Science Fund (FWF) (10.55776/STA223). K-H. Nguyen-Dang thanks the Morningside Center of Mathematics, Chinese Academy of Sciences, for their support and great working conditions.



\vspace{0.5cm}

\noindent \textsc{Tr\'i Minh L\^E}
\medskip 

\noindent  Faculty of Mathematics,
University of Vienna
\newline Oskar-Morgenstern-Platz 1, 1090 Wien
\medskip
\newline\noindent E-mail: \texttt{tri.minh.le@univie.ac.at}
\newline\noindent\texttt{https://sites.google.com/view/tri-minh-le}
\smallskip\newline\noindent
\noindent Research supported by the Austrian \texttt{FWF} grant \texttt{DOI 10.55776/STA223}.

\bigskip

\noindent \textsc{Khai--Hoan NGUYEN--DANG}

\medskip

\noindent Morningside Center of Mathematics, Chinese Academy of Sciences, Beijing, China
\newline No. 55, Zhongguancun East Road, Haidian District, Beijing 100190 \medskip
\newline\noindent E-mail: \texttt{khaihoann@gmail.com}
\newline\noindent\texttt{https://sites.google.com/view/nguyen-dang-khai-hoan/home} \smallskip\newline
\noindent
\end{document}